\theoremstyle{plain}
\newtheorem{theorem}{Theorem}[section]
\newtheorem{def-thm}[theorem]{Definition-Theorem}
\newtheorem{lemma}[theorem]{Lemma}
\theoremstyle{definition}
\newtheorem{remark}[theorem]{Remark}
\newtheorem{example}[theorem]{Example}
\newtheorem*{acknowledgement}{Acknowledgement}
\newcommand{\PP}{\mathbb{P}}
\newcommand{\OO}{{\mathcal O}}
\begin{document}

\title[Metrics of Positive Holomorphic Sectional Curvature]{Hermitian Metrics of Positive Holomorphic Sectional Curvature on Fibrations}

\begin{abstract}
The main result of this note essentially is that if the base and fibers of a compact fibration carry Hermitian metrics of positive holomorphic sectional curvature, then so does the total space of the fibration. The proof is based on the use of a warped product metric as in the work by Cheung in case of negative holomorphic sectional curvature, but differs in certain key aspects, e.g., in that it does not use the subadditivity property for holomorphic sectional curvature due to Grauert-Reckziegel and Wu.\end{abstract}

\author{Ananya Chaturvedi}

\address{Department of Mathematics\\Tata Institute of Fundamental Research\\Dr Homi Bhabha Road, Navy Nagar, Colaba, Mumbai, Maharashtra 400005\\India}
\email{ananya@math.tifr.res.in}

\author{Gordon Heier}
\address{Department of Mathematics\\University of Houston\\4800 Calhoun Road, Houston, TX 77204\\USA}
\email{heier@math.uh.edu}

\subjclass[2010]{14D06, 32L05, 32Q10, 53C55}

\keywords{Compact complex manifolds, Hermitian metrics, fibrations, positive holomorphic sectional curvature}

\maketitle

\section{Introduction}
There has recently been strong renewed interest in the interplay of (semi-)positive or (semi-)negative holomorphic sectional curvature and the structure of Hermitian or K\"ahler manifolds in complex differential geometry. Under assumptions of the existence of a metric of (semi-)definite holomorphic sectional curvature, the works \cite{heier_lu_wong_mrl}, \cite{HLW_JDG}, \cite{Wu_Yau_Invent}, \cite{Tosatti_Yang}, \cite{Wu_Yau_CAG}, \cite {Diverio_Trapani}, \cite{HLWZ} (for (semi-)negative holomorphic sectional curvature) and \cite{Tsukamoto_1957}, \cite{heier_wong_cag},  \cite{heier_wong_arxiv}, \cite{Yang} (for (semi-)positive holomorphic sectional curvature) have established interesting consequences for the geometric structure of such manifolds, in particular concerning the canonical line bundle. \par

Conversely, it is an interesting problem to prove the existence of metrics of (semi-)definite holomorphic sectional curvature under the assumption of a certain geometric structure. In this direction, Grauert-Reckziegel \cite{Grauert_Reckziegel} proved the existence of a neighborhood of a fiber of a holomorphic family of compact Riemann surfaces of genus at least $2$ over a Riemann surface such that the neighborhood has negative holomorphic sectional curvature. Cowen \cite{Cowen} extended this work to the case where the fibers are Hermitian compact complex manifolds of arbitrary dimension whose holomorphic sectional curvature is negative. Subsequently, Cheung \cite[Theorem 1]{Cheung} established the following (optimal) result. Let $X,Y$ be complex manifolds. Let $\pi:X\to Y$ be a surjective submersion, which we subsequently will refer to as a {\it fibration}. Assume that $X$ (and thus $Y$) is compact, making $\pi$ into a {\it compact fibration}. Furthermore, assume that $Y$ possesses a Hermitian metric of negative holomorphic sectional curvature and that there exists a smooth family of Hermitian metrics on the fibers such that each metric is of negative holomorphic sectional curvature. Then there exists a Hermitian metric on $X$ which is of negative holomorphic sectional curvature everywhere. \par

On the side of positive curvature, Hitchin \cite{Hitchin} proved that Hirzebruch surfaces, which are $\PP^1$-bundles over $\PP^1$, carry a (Hodge) metric of positive holomorphic sectional curvature. The explicit values and pinching constants for Hitchin's metrics were computed in \cite{ACH}, which was further generalized in \cite{yang_zheng}. The fact that Hirzebruch surfaces are isomorphic to the projectivization of vector bundles $\PP(\OO_{\PP^1}(a) \oplus  \OO_{\PP^1})$, $a\in \{0,1,2,3,\ldots\}$, over $\PP^1$, motivated the statement and proof of the main theorem in \cite{AHZ}, establishing the existence of a K\"ahler metric of positive holomorphic sectional curvature on an arbitrary projectivized holomorphic vector bundle over a compact K\"ahler base manifold of positive holomorphic sectional curvature. The main result of this note is the following full-fledged positive curvature analog for Cheung's theorem.

\begin{theorem}\label{mthm}
Let $\pi: X \rightarrow Y$ be a compact fibration. Assume that $Y$ has a Hermitian metric of positive holomorphic sectional curvature and that there exists a smooth family of Hermitian metrics on the fibers which all have positive holomorphic sectional curvature. Then there exists a Hermitian metric on $X$ with positive holomorphic sectional curvature.
\end{theorem}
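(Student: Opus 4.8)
The plan is to imitate Cheung's warped product construction but to obtain a \emph{lower} bound for the holomorphic sectional curvature by a direct computation, rather than an upper bound via the Grauert--Reckziegel--Wu subadditivity inequality. By compactness of $Y$, the given metric $g_Y$ has holomorphic sectional curvature bounded below by some $2c>0$; likewise, since $X$ is compact and the family of fiber metrics is smooth, the associated Hermitian metric $g_V$ on the vertical bundle $V:=\ker d\pi$ restricts to every fiber with holomorphic sectional curvature at least $2c$ (after shrinking $c$ if necessary). Fix a $C^\infty$ complex-linear splitting of $0\to V\to TX\xrightarrow{d\pi}\pi^*TY\to 0$, giving a smooth decomposition $TX=H_0\oplus V$, and for a parameter $t>0$ set
\[
g_t(\xi,\bar\eta):=t\,g_Y(d\pi\,\xi,\overline{d\pi\,\eta})+g_V(P\xi,\overline{P\eta}),
\]
where $P\colon TX\to V$ is the projection with kernel $H_0$; this is a Hermitian metric on $X$, namely the warped product metric $t\,\pi^*g_Y\oplus g_V$ with $H_0\perp_{g_t}V$, and letting $t\to\infty$ amounts to shrinking the fibers. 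I claim that $g_t$ has positive holomorphic sectional curvature once $t$ is large enough.

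To see this, fix $p\in X$, put $y=\pi(p)$, and let $\xi=\xi_H+\xi_V\in T_pX$ be a $g_t$-unit vector with $\xi_H\in H_0$ and $\xi_V\in V$. Set $b:=|\xi_V|^2_{g_V}\in[0,1]$; since $H_0\perp_{g_t}V$ the $g_t$-norm splits, so $a:=t\,|d\pi\,\xi|^2_{g_Y}=1-b$ and the denominator $|\xi|^4_{g_t}$ of the holomorphic sectional curvature equals $1$. Choose holomorphic coordinates $(z,w)=(z^\alpha,w^i)$ on $X$ with $\pi(z,w)=z$, so that $V=\mathrm{span}(\partial_{w^i})$, and describe $H_0$ by connection functions $A^i_\alpha$ through the horizontal lifts $e_\alpha=\partial_{z^\alpha}-A^i_\alpha\partial_{w^i}$. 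Then $g_{t,\alpha\bar\beta}=t(g_Y)_{\alpha\bar\beta}(z)+O(1)$, $g_{t,\alpha\bar j}=O(1)$, $g_{t,i\bar j}=h_{i\bar j}$ is the fiber metric, and since $H_0$ is $g_t$-orthogonal to $V$ the relevant Schur complement collapses, giving $g_t^{\alpha\bar\beta}=\tfrac1t(g_Y)^{\alpha\bar\beta}$, $g_t^{\alpha\bar j}=O(1/t)$ and $g_t^{i\bar j}=h^{i\bar j}+O(1/t)$, where all $O(\cdot)$ are uniform over $X$.

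The crucial point is a size dichotomy for the Chern curvature $R^{g_t}_{I\bar JK\bar L}=-\partial_I\partial_{\bar J}g_{t,K\bar L}+g_t^{P\bar Q}(\partial_Ig_{t,K\bar Q})(\partial_{\bar J}g_{t,P\bar L})$: since $g_Y$ depends only on the base coordinates, a factor of $t$ can be produced only by a $\partial_z$-differentiation of a horizontal-horizontal coefficient, so every component is $O(1)$ unless all four indices are horizontal, in which case $R^{g_t}_{\alpha\bar\beta\gamma\bar\delta}=t\,R^{g_Y}_{\alpha\bar\beta\gamma\bar\delta}+O(1)$; moreover $R^{g_t}_{i\bar jk\bar l}=R^{F}_{i\bar jk\bar l}+O(1/t)$, where $R^{F}$ is the Chern curvature of the fiber metric. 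On the other hand, each horizontal coordinate component of the unit vector $\xi$ has size $O(\sqrt{a/t})=O(1/\sqrt t)$, while each vertical one is $O(\sqrt b+1/\sqrt t)$. Expanding $R^{g_t}(\xi,\bar\xi,\xi,\bar\xi)=\sum R^{g_t}_{I\bar JK\bar L}\xi^I\bar\xi^J\xi^K\bar\xi^L$ and singling out the all-horizontal and all-vertical summands therefore gives
\[
R^{g_t}(\xi,\bar\xi,\xi,\bar\xi)=\mathrm{HSC}_{g_V}(\xi_V)\,b^2+\tfrac1t\,\mathrm{HSC}_{g_Y}(d\pi\,\xi)\,a^2+E,
\]
with the conventions that the first term is $0$ when $\xi_V=0$ and the second is $0$ when $d\pi\,\xi=0$, so that $\mathrm{HSC}_{g_V}(\xi_V)\,b^2\geq 2c\,b^2$ and $\mathrm{HSC}_{g_Y}(d\pi\,\xi)\,a^2\geq 2c\,a^2$; here $E$ collects all remaining (mixed) terms together with the $O(1/t)$ discrepancies. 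Each summand of $E$ carries an $O(1)$ curvature coefficient and at least one horizontal $\xi$-leg --- hence a factor $O(1/\sqrt t)$ --- and, because one horizontal leg is accompanied by three vertical legs, an extra factor $(\sqrt b+1/\sqrt t)^3$ (with the analogous statement for more horizontal legs); a careful enumeration then yields $|E|\leq C\bigl(\tfrac1{t^2}+\tfrac bt+\tfrac{b^{3/2}}{\sqrt t}\bigr)$ for a constant $C$ independent of $p$, $\xi$ and $t$.

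Finally, an elementary case analysis --- according to whether $b$ is at most or at least $t^{-1/3}$ --- shows that $2c\,b^2+\tfrac{2c}{t}\,a^2$ dominates the above bound on $|E|$ once $t$ exceeds a constant depending only on $c$ and $C$; since $p$ and $\xi$ entered only through $a$ and $b$ (the constants $c$, $C$ being uniform over $X$ by compactness), it follows that $\mathrm{HSC}_{g_t}>0$ on all of $X$ for such $t$, which proves the theorem. The main obstacle is the bookkeeping behind the estimate for $E$: one must verify that no mixed curvature component secretly grows like $t$ (only the purely horizontal one does) and that each mixed term really carries the claimed powers of $1/\sqrt t$ and of $\sqrt b+1/\sqrt t$ needed to beat the small horizontal main term $\tfrac1t\,\mathrm{HSC}_{g_Y}(d\pi\,\xi)\,a^2$ in the regime where $\xi$ is nearly vertical --- and it is exactly here that one cannot appeal to Grauert--Reckziegel--Wu subadditivity, which bounds holomorphic sectional curvature only from above.
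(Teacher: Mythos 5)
Your proposal is correct and follows the same overall strategy as the paper: the metric $t\,\pi^*g_Y + g_V(P\cdot,P\cdot)$ is exactly the paper's $\Psi_\lambda=\widetilde\Phi+\lambda\pi^*(\omega_Y)$, and both proofs rest on the same three-block analysis of the Chern curvature (vertical block $\to$ fiber curvature, horizontal block $\sim t\cdot R^{g_Y}$, mixed components $O(1)$) followed by an elementary absorption of the mixed terms. Where you genuinely diverge is in the execution, and your route is arguably cleaner: the paper packages the absorption step into a pointwise criterion (its Lemma \ref{lem1}, with constants $K_0,K_1,K_2$ and homogeneous AM--GM inequalities in the $\xi_i$) and establishes the positivity and $O(\lambda)$ growth of the horizontal block indirectly, via restriction to the transverse submanifold $\{z_1=\cdots=z_s=0\}$, Wu's normal coordinates, the one-dimensional Gaussian curvature expansion of $(g+\lambda h)\,dz\otimes d\bar z$ (its Lemma \ref{lem2} and Remark \ref{Tensor_sum_order}), and the curvature-decreasing property of submanifolds; it then needs a separate monotonicity-in-$\lambda$ plus compactness argument to pass from pointwise to global. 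You instead compute the horizontal block directly from the Schur complement (noting $g_t^{\alpha\bar\beta}=\tfrac1t(g_Y)^{\alpha\bar\beta}$), normalize $\xi$ to be a $g_t$-unit vector, and carry uniform constants $c,C$ from the start, which buys you the global statement without a second compactness pass and without any appeal to Wu's lemmas. One small caution in your final step: in the regime $b\le t^{-1/3}$ the error term $C b^{3/2}t^{-1/2}$ is bounded only by $Ct^{-1}$, which is of the same order as $\tfrac{2c}{t}a^2$, so the dichotomy as literally stated does not close; you must also spend the vertical main term there, e.g.\ via Young's inequality $b^{3/2}t^{-1/2}\le \varepsilon b^2+C_\varepsilon t^{-2}$, after which everything is absorbed exactly as you claim. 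This is a two-line fix, not a gap, and with it your argument is a complete and somewhat more self-contained version of the paper's proof.
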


The main strategy in Cheung's proof is to construct a {\it warped product metric} from the ``vertical" metrics and a large real constant multiple of the pull-back of the metric from the base. The idea of using warped product metrics to obtain negative curvature metrics seems to go back to the work of Bishop-O'Neill in \cite{Bishop_O'Neill}, and it has become an important tool in differential geometry and theoretical physics, especially in general relativity in the study of solutions of Einstein's equation. Our goal was to transpose Cheung's proof to the positive case and in doing so, find replacements for the key estimates in Cheung's proof that rely on (a) the curvature decreasing property of Hermitian subbundles and (b) the subadditivity property of holomorphic sectional curvature due to \cite[Aussage 1]{Grauert_Reckziegel} and \cite[Theorem 1]{Wu}). Both of these properties appear to ``go in the wrong direction" in the positive case, which adds an additional layer of complication to the present problem and generally to many problems in complex differential geometry in positive curvature.

\begin{remark}
It should be mentioned that we are strictly working in the Hermitian setting in this paper and that the constructed metric on the total space $X$ in general will not be K\"ahler, even if all the given metrics are K\"ahler. While most of the interesting consequences for the geometric structure mentioned in the very first paragraph require a K\"ahler assumption, it was shown in \cite[Theorem 1.2]{Yang} that a Hermitian manifold of positive holomorphic sectional curvature has negative Kodaira dimension. Therefore, we know in the situation of Theorem \ref{mthm} that the total space $X$ has negative Kodaira dimension as well. However, there is no need to use Theorem 1.1 to conclude this: it is in fact an immediate consequence of the Easy Addition Formula for Kodaira dimensions. Note that our theorem also bears resemblance to the result of Graber-Harris-Starr \cite{GHS} yielding, roughly speaking, the rational connectedness of the total space due to the rational connectedness of the base and fibers. Recall also that it was proven in \cite{heier_wong_arxiv} that a projective K\"ahler manifold of positive holomorphic sectional curvature is rationally connected.
\end{remark}

We would like to remark here that unlike in the definite curvature cases, the warping technique does not seem to work for the semi-definite curvature analogs of Theorem \ref{mthm}. In particular, if we have positive holomorphic sectional curvature on the base and semi-positive holomorphic sectional curvature along each fiber, then the following Example \ref{ex_pos_sempos} is a counterexample to the analogous statement of Lemma \ref{lem1} in this case. The warp factor $\lambda$ in Example \ref{ex_pos_sempos} may be non-constant and is merely assumed to depend smoothly on the points of the base space $Y$, whereby we show that a clever choice of a non-constant warping factor cannot help to accomplish the goal. For complete details of the computations of the curvatures in Example \ref{ex_pos_sempos}, we refer the reader to the first author's thesis \cite{thesis}, which also contains further counterexamples along these lines. Naturally, it remains an open question if the semi-definite cases can be handled by methods other than warped products.
\begin{example}\label{ex_pos_sempos}
Let $X = D_1 \times D_2$ be the bi-disk, $Y = D_2$, and $(z_1,z_2)$ be the coordinate system in $X$. The holomorphic map $\pi: X \rightarrow Y$ is given by the projection onto the second coordinate, i.e., $(z_1,z_2) \mapsto z_2$. Clearly, $\pi$ is of maximal rank everywhere. The Hermitian metric $\omega_Y =\ 1/\big(1+z_2\bar z_2\big)\ dz_2 \otimes d\bar z_2$ on $Y$ has positive holomorphic sectional curvature everywhere on $Y$, and the following tensor on $X$
\[\Phi =\ \frac{e^{2z_2\bar z_2}}{1 + (z_1\bar z_1)^2 e^{4z_2\bar z_2}}\ dz_1 \otimes d\bar z_1\]
yields a Hermitian metric of semi-positive holomorphic sectional curvature when restricted to any of the fibers $\pi^{-1}(z_2), \, z_2 \in Y$. Clearly, $\Phi$ varies smoothly with respect to the base points in $Y$. Also, $\Phi + \mu \pi^*\omega_Y$ is a Hermitian metric on $X$ for any $\mu > 0$. However, for any smooth real-valued positive function $\lambda(z_2)$ on $Y$, the Hermitian metric
\begin{equation*}
\begin{aligned}
G =\ & \Phi + \lambda \pi^*\omega_Y \\
  =\ & \frac{e^{2z_2\bar z_2}}{1 + (z_1\bar z_1)^2 e^{4z_2\bar z_2}}\ dz_1 \otimes d\bar z_1 + \frac{\lambda}{1+z_2\bar z_2}\ dz_2 \otimes d\bar z_2
\end{aligned}
\end{equation*}
does not have semi-positive holomorphic sectional curvature everywhere on $X$.\qed
\end{example}

\par This paper is organized as follows. In Section \ref{sec_def}, we will recall basic definitions. In Section \ref{lemmas}, we will state and prove two key lemmas. In Section \ref{proof_mthm}, we will prove Theorem \ref{mthm}. Finally, in Section 5, we explain why our method fails in general if $\pi$ is not a submersion.

\begin{acknowledgement}
This work was part of the first author's Ph.D. thesis written under the direction of the second author at the University of Houston. The first author would like to thank TIFR Center for Applicable Mathematics, Bengaluru and TIFR, Mumbai for support during the time this paper was finalized.
\end{acknowledgement}

\section{Basic definitions}\label{sec_def}
Let $M$ be an $n$-dimensional manifold with local coordinates $z_1,\ldots,z_n$. Let
\begin{equation*}
\sum_{i,j=1}^n g_{i\bar j} dz_i\otimes d\bar z_j
\end{equation*}
be a Hermitian metric on $M$. Under the usual abuse of terminology, we will alternatively refer to the associated (1,1)-form $\omega=\frac{\sqrt{-1}}{2}\sum_{i,j=1}^n g_{i\bar j} dz_i\wedge d\bar z_j$ as the metric on $M$.
\par The components $R_{i\bar j k \bar l}$ of the curvature tensor $R$ associated with the metric connection are locally given by the formula
\begin{equation}\label{curv_formula}
R_{i\bar j k \bar l}=-\frac{\partial^2 g_{i\bar j}}{\partial z_k\partial \bar z _l}+\sum_{p,q=1}^n g^{p\bar q}\frac{\partial g_{i\bar p}}{\partial z_k}\frac{\partial g_{q\bar j}}{\partial \bar z_l}.
\end{equation}\par

If $\xi=\sum_{i=1}^n\xi_i \frac{\partial }{\partial z_i}$ is a non-zero complex tangent vector at $p\in M$, then the {\it holomorphic sectional curvature} $K(\xi)$ is given by
\begin{equation}\label{hol_sect_curv_def}
K(\xi)=\left( 2 \sum_{i,j,k,l=1}^n R_{i\bar j k \bar l}(p)\xi_i\bar\xi_j\xi_k\bar\xi_l\right) \Bigg/ \left(\sum_{i,j,k,l=1}^n g_{i\bar j}g_{k\bar l} \xi_i\bar\xi_j\xi_k\bar\xi_l\right).
\end{equation}
Note that the holomorphic sectional curvature of $\xi$ is clearly invariant under multiplication of $\xi$ with a real non-zero scalar, and it thus suffices to consider unit vectors, for which the value of the denominator is $1$.

\section{Two lemmas}\label{lemmas}
In this section, we state and prove two lemmas that pave the way to the proof of Theorem \ref{mthm} in the next section.
\begin{lemma} \label{lem1}
Let $M$ be an $n$-dimensional Hermitian manifold, and $G$ be a Hermitian metric on $M$. Let $R_{i \bar j k \bar l}$ be the components of the curvature tensor with respect to $G$ for $i,j,k,l = 1, \ldots, n$. Suppose the following is true at a point $p \in M$ for some positive constants $K_0, K_1, K_2$, and a natural number $s < n$:
\begin{enumerate}
  \item \[\sum_{i,j,k,l=1}^s R_{i \bar j k \bar l}(p) \, \xi_i\bar\xi_j\xi_k\bar\xi_l \geq K_0 \sum_{i,j=1}^s \xi_i\bar\xi_i\xi_j\bar\xi_j\]
        for all $\xi_i \in \mathbb{C}, \quad i=1,2,\ldots,s$.
  \item \[|R_{i \bar j k \bar l}(p)| < K_1\]
        whenever $\min (i,j,k,l) \leq s$ and $\max (i,j,k,l) > s$.
  \item \[\sum_{\alpha,\beta,\gamma,\delta = s+1}^n R_{\alpha\bar\beta\gamma\bar\delta}(p) \, \xi_\alpha\bar\xi_\beta\xi_\gamma\bar\xi_\delta \geq K_2 \sum_{\alpha,\beta = s+1}^n \xi_\alpha\bar\xi_\alpha\xi_\beta\bar\xi_\beta\]
        for all $\xi_\alpha \in \mathbb{C}, \quad \alpha = s+1,s+2,\ldots,n$.
\end{enumerate}
Then there exists a positive constant $\mathcal{K}$ depending only on $K_0/K_1$ such that if $K_2/K_1 \geq \mathcal{K}$, then $G$ has positive holomorphic sectional curvature at the point $p$.
\end{lemma}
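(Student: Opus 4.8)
\emph{Proof proposal.} The plan is to reduce the statement to a single elementary inequality for a quartic form on $\CC^n$ and then to win by balancing the two ``diagonal-block'' positivity hypotheses (i) and (iii) against the ``off-diagonal'' bound (ii) by means of weighted arithmetic--geometric mean inequalities. By \eqref{hol_sect_curv_def}, since the denominator $\sum_{i,j,k,l} g_{i\bar j}g_{k\bar l}\xi_i\bar\xi_j\xi_k\bar\xi_l = \bigl(\sum_{i,j} g_{i\bar j}\xi_i\bar\xi_j\bigr)^2$ is strictly positive for $\xi \neq 0$, it suffices to show that the (real) quantity
\[
Q(\xi)\ :=\ \sum_{i,j,k,l=1}^n R_{i\bar jk\bar l}(p)\,\xi_i\bar\xi_j\xi_k\bar\xi_l
\]
is positive for every $\xi=(\xi_1,\dots,\xi_n)\in\CC^n\setminus\{0\}$. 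Write $\{1,\dots,n\}=I'\sqcup I''$ with $I'=\{1,\dots,s\}$ and $I''=\{s+1,\dots,n\}$, and decompose $Q=Q'+Q''+Q_{\mathrm{mix}}$, where $Q'$ (resp.\ $Q''$) collects the terms all of whose indices lie in $I'$ (resp.\ $I''$), and $Q_{\mathrm{mix}}$ collects all remaining terms. Setting $u:=\bigl(\sum_{i\in I'}|\xi_i|^2\bigr)^{1/2}$ and $v:=\bigl(\sum_{\alpha\in I''}|\xi_\alpha|^2\bigr)^{1/2}$, hypotheses (i) and (iii) give immediately $Q'(\xi)\ge K_0u^4$ and $Q''(\xi)\ge K_2v^4$.

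Every term in $Q_{\mathrm{mix}}$ involves at least one index $\le s$, so by (ii) its coefficient is $<K_1$ in absolute value, and hence
\[
|Q_{\mathrm{mix}}(\xi)|\ \le\ K_1\!\!\sum_{(i,j,k,l)\text{ mixed}}\!\!|\xi_i\xi_j\xi_k\xi_l|\ =\ K_1\bigl[(A+B)^4-A^4-B^4\bigr],
\]
where $A:=\sum_{i\in I'}|\xi_i|$ and $B:=\sum_{\alpha\in I''}|\xi_\alpha|$. By Cauchy--Schwarz $A\le\sqrt n\,u$ and $B\le\sqrt n\,v$, so expanding $(A+B)^4-A^4-B^4=4A^3B+6A^2B^2+4AB^3$ gives a bound $|Q_{\mathrm{mix}}(\xi)|\le CK_1(u^3v+u^2v^2+uv^3)$ with $C$ depending only on $n$. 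Thus the entire problem reduces to showing that, for $K_2/K_1$ sufficiently large,
\[
K_0u^4+K_2v^4-CK_1\bigl(u^3v+u^2v^2+uv^3\bigr)\ >\ 0\qquad\text{for all }(u,v)\neq(0,0).
\]

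To prove this last inequality I would apply weighted AM--GM in the forms $u^3v\le\tfrac{3\varepsilon}{4}u^4+\tfrac1{4\varepsilon^3}v^4$, $\ u^2v^2\le\varepsilon'u^4+\tfrac1{4\varepsilon'}v^4$, and $uv^3\le\tfrac1{4(\varepsilon'')^3}u^4+\tfrac{3\varepsilon''}{4}v^4$, valid for all positive $\varepsilon,\varepsilon',\varepsilon''$. Since only $u^4$ and $v^4$ appear on the right-hand sides, I am free to route the losses as convenient: choose $\varepsilon,\varepsilon'$ small so that the $u^4$-cost of the $u^3v$ and $u^2v^2$ terms together is at most $\tfrac34K_0u^4$, and choose $\varepsilon''$ of size $\sim K_2/(CK_1)$ so that the $uv^3$ term costs only $\tfrac38K_2v^4$ plus a $u^4$-term with coefficient $O(K_1^4/K_2^3)$. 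Collecting everything then bounds $Q(\xi)$ below by $c_1u^4+c_2v^4$ with $c_1=\tfrac14K_0-O(K_1^4/K_2^3)$ and $c_2=\tfrac58K_2-O\bigl(K_1^4/K_0^3+K_1^2/K_0\bigr)$ (the implied constants depending on $n$), and both coefficients are positive once $K_2/K_1\ge\mathcal K$ for a threshold $\mathcal K$ that is expressible in terms of $K_0/K_1$ (and the dimensional constant $C$). The step I expect to be the genuine obstacle is making this balance work uniformly in $(u,v)$: the cross term $u^3v$ is dangerous precisely when $v$ is small, where the huge coefficient $K_2$ in front of $v^4$ is useless and the mixed contribution has to be absorbed entirely into $K_0u^4$ --- which is exactly why the threshold $\mathcal K$ must be governed by the ratio $K_0/K_1$ rather than by $K_0$ and $K_1$ individually. (Since (ii) only supplies a pointwise bound on the mixed coefficients, one should likewise expect $\mathcal K$ to retain a dependence on $n$ through $C$.)
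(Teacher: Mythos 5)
Your proposal is correct and follows essentially the same route as the paper: split the quartic curvature form into the pure $\{1,\dots,s\}$ block, the pure $\{s+1,\dots,n\}$ block, and the mixed terms, bound the mixed terms via hypothesis (ii) and the $4$--$6$--$4$ count, and absorb them with weighted AM--GM whose parameters are calibrated by $K_0/K_1$, the residual coefficient on the base-direction quartic defining $\mathcal{K}$. The only differences are bookkeeping (you collapse to $u,v$ by Cauchy--Schwarz before applying AM--GM, and you route the $uv^3$ loss slightly differently), neither of which changes the argument.
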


\begin{proof}
Suppose the Hermitian metric $G$ on $M$ is given locally by
\begin{equation*}
G = \sum_{i,j=1}^n g_{i\bar j} \, dz_i \otimes d\bar z_j.
\end{equation*}
Since we are only interested in the sign of the holomorphic sectional curvature, it suffices to check the numerator of \eqref{hol_sect_curv_def} for positive sign at $p$ in the direction of $\xi = (\xi_1,\ldots,\xi_n)$ with respect to the Hermitian metric $G$ which is given as follows:
\begin{equation}
\begin{aligned}\label{curv_mod}
  & \sum_{i,j,k,l=1}^n R_{i\bar jk\bar l}(p) \, \xi_i\bar\xi_j\xi_k\bar\xi_l \\
      = & \, \sum_{i,j,k,l=1}^s R_{i \bar jk\bar l}(p) \, \xi_i\bar \xi_j\xi_k\bar\xi_l + \sum_{\substack{i,j,k,l=1 \\ \min(i,j,k,l) \leq s, \\ \max(i,j,k,l) > s}}^n R_{i\bar jk\bar l}(p) \, \xi_i\bar\xi_j\xi_k\bar\xi_l \\
        & + \sum_{\alpha,\beta,\gamma,\delta = s+1}^n R_{\alpha\bar\beta\gamma\bar\delta}(p) \, \xi_\alpha\bar\xi_\beta\xi_\gamma\bar\xi_\delta \\
   \geq & \, \sum_{i,j,k,l=1}^s R_{i\bar jk\bar l}(p) \, \xi_i\bar\xi_j\xi_k\bar\xi_l - \Bigg|\sum_{\substack{i,j,k,l=1 \\ \min(i,j,k,l) \leq s, \\ \max(i,j,k,l) > s}}^n R_{i\bar jk\bar l}(p) \, \xi_i\bar\xi_j\xi_k\bar\xi_l\Bigg| \\
        & + \sum_{\alpha,\beta,\gamma,\delta = s+1}^n R_{\alpha\bar\beta\gamma\bar\delta}(p) \, \xi_\alpha\bar\xi_\beta\xi_\gamma\bar\xi_\delta \\
   \geq & \, \sum_{i,j,k,l=1}^s R_{i\bar jk\bar l}(p) \,\xi_i\bar\xi_j\xi_k\bar\xi_l - \sum_{\substack{i,j,k,l=1 \\ \min(i,j,k,l) \leq s, \\ \max(i,j,k,l) > s}}^n \big|R_{i\bar jk\bar l}(p)\big| \, |\xi_i| |\xi_j| |\xi_k| |\xi_l| \\
        & + \sum_{\alpha,\beta,\gamma,\delta = s+1}^n R_{\alpha\bar\beta\gamma\bar\delta}(p) \, \xi_\alpha\bar\xi_\beta\xi_\gamma\bar\xi_\delta.
\end{aligned}
\end{equation}
There are three possible options in the second summation for $\min(i,j,k,l) \leq s$ and $\max(i,j,k,l) > s$:
\begin{enumerate}
  \item Only one of the four indices is less than or equal to $s$, corresponding to \textit{four} types of summands.
  \item Exactly two out of the four indices are less than or equal to $s$, corresponding to \textit{six} types of summands.
  \item Three out of the four indices are less than or equal to $s$, corresponding to \textit{four} types of summands.
\end{enumerate}
Substituting the above facts along with the hypotheses of the lemma in \eqref{curv_mod}, we obtain
\begin{equation}
\begin{aligned}\label{Cp_curv}
  \sum_{i,j,k,l=1}^n R_{i\bar jk\bar l}(p) \, \xi_i\bar\xi_j\xi_k\bar\xi_l \geq & \, K_0 \sum_{i,j=1}^s \xi_i\bar\xi_i\xi_j\bar\xi_j - 4K_1 \sum_{\alpha, \beta, \gamma = s+1}^n \sum_{i=1}^s |\xi_i||\xi_\alpha||\xi_\beta||\xi_\gamma| \\
           & - 6K_1  \sum_{\alpha, \beta = s+1}^n \sum_{i, j = 1}^s |\xi_i| |\xi_j| |\xi_\alpha| |\xi_\beta| \\
           & - 4K_1 \sum_{\alpha = s+1}^n \sum_{i, j, k = 1}^s |\xi_i| |\xi_j| |\xi_k| |\xi_\alpha| + K_2 \sum_{\alpha, \beta = s+1}^n \xi_\alpha\bar\xi_\alpha\xi_\beta\bar\xi_\beta
\end{aligned}
\end{equation}
for any choice of $\xi = (\xi_1,\ldots,\xi_n) \in \mathbb{C}^n$. For any choice of positive numbers $a,b,c,d$, we have:
\begin{equation}\label{prod_sum_ineq}
\begin{aligned}
  |\xi_i||\xi_\alpha||\xi_\beta||\xi_\gamma| \leq & \, a^2 |\xi_i|^4 + \frac{1}{a^2} |\xi_\alpha|^4 + |\xi_\beta|^2|\xi_\gamma|^2, \\
  |\xi_i||\xi_j||\xi_\alpha||\xi_\beta| \leq & \, b^2 |\xi_i|^2|\xi_j|^2 + \frac{1}{b^2} |\xi_\alpha|^2|\xi_\beta|^2, \\
  |\xi_i||\xi_j||\xi_k||\xi_\alpha| \leq & \, c^2 |\xi_i|^2|\xi_j|^2 + \frac{d^2}{c^2} |\xi_k|^4 + \frac{1}{c^2d^2} |\xi_\alpha|^4.
\end{aligned}
\end{equation}\par

Substituting the inequalities from \eqref{prod_sum_ineq} into \eqref{Cp_curv} and using the trivial inequalities $\sum_{i=1}^s |\xi_i|^4 \leq \sum_{i,j=1}^s |\xi_i|^2|\xi_j|^2$ and $\sum_{\alpha=s+1}^n |\xi_\alpha|^4 \leq \sum_{\alpha, \beta = s+1}^n |\xi_\alpha|^2|\xi_\beta|^2$, we obtain
\begin{equation*}
\begin{aligned}
  \sum_{i,j,k,l=1}^n R_{i\bar jk\bar l}(p) \, \xi_i\bar\xi_j\xi_k\bar\xi_l \geq\ & \, K_0 \sum_{i,j=1}^s |\xi_i|^2|\xi_j|^2 - K_1\Bigg(\bigg(4a^2(n-s)^3 + 6b^2(n-s)^2 \\
           & + 4c^2s(n-s) + \frac{4d^2}{c^2}s^2(n-s)\bigg) \sum_{i, j = 1}^s |\xi_i|^2|\xi_j|^2 \\
           & + \bigg(\frac{4}{a^2} s(n-s)^2 + 4s(n-s) + \frac{6}{b^2}s^2 \\
           &+ \frac{4}{c^2d^2}s^3\bigg) \sum_{\alpha,\beta=s+1}^n |\xi_\alpha|^2|\xi_\beta|^2\Bigg) + K_2 \sum_{\alpha, \beta = s+1}^n |\xi_\alpha|^2|\xi_\beta|^2.
\end{aligned}
\end{equation*}\par
We may choose $a,b,c,d$ such that
\[4a^2(n-s)^3 + 6b^2(n-s)^2 + 4c^2s(n-s) + \frac{4d^2}{c^2}s^2(n-s) \leq \frac{1}{2} \frac{K_0}{K_1}.\]
Let $\mathcal{K} = \frac{4}{a^2} s(n-s)^2 + 4s(n-s) + \frac{6}{b^2}s^2 + \frac{4}{c^2d^2}s^3$. Note that since the choice of $a,b,c,d$ is based on $K_0/K_1$, therefore $\mathcal{K}$ too depends only on $K_0/K_1$. Then for such a choice of $a, b, c$ and $d$,

\begin{equation*}
\begin{aligned}
  \sum_{i,j,k,l=1}^n R_{i\bar jk\bar l}(p) \, \xi_i\bar\xi_j\xi_k\bar\xi_l \geq & \, K_0 \sum_{i,j=1}^s |\xi_i|^2|\xi_j|^2 - K_1\bigg(\frac{K_0}{2K_1} \sum_{i, j = 1}^s |\xi_i|^2|\xi_j|^2 \\
           & + \mathcal{K} \sum_{\alpha,\beta=s+1}^n |\xi_\alpha|^2|\xi_\beta|^2\bigg) + K_2 \sum_{\alpha, \beta = s+1}^n |\xi_\alpha|^2|\xi_\beta|^2 \\
         = & \frac{K_0}{2} \sum_{i,j=1}^s |\xi_i|^2|\xi_j|^2 + \big(K_2 - K_1\mathcal{K}\big) \sum_{\alpha, \beta = s+1}^n |\xi_\alpha|^2|\xi_\beta|^2.
\end{aligned}
\end{equation*}\par
If $K_2-K_1\mathcal{K} \geq 0$, i.e., $\mathcal{K} \leq K_2/K_1$, then clearly $2\sum_{i,j,k,l=1}^n R_{i\bar jk\bar l}(p) \, \xi_i\bar\xi_j\xi_k\bar\xi_l$ is positive, which is the numerator of the holomorphic sectional curvature in the direction of a tangent vector $(\xi_1,\ldots,\xi_n)$ as given in \eqref{hol_sect_curv_def}. Since the denominator of \eqref{hol_sect_curv_def} is always positive, we conclude that the holomorphic sectional curvature at $p$ with respect to $G$ is positive in the direction of $(\xi_1,\ldots,\xi_n)$, if the above condition is satisfied \big(i.e., $\mathcal{K} \leq K_2/K_1$\big).
\end{proof}

\begin{lemma} \label{lem2}
Let $M$ be an $n$-dimensional complex manifold with two Hermitian metrics $G$ and $H$ defined on it. Suppose that the metric $H$ has positive holomorphic sectional curvature at a point $p \in M$. Then $G + \lambda H$ also has positive holomorphic sectional curvature at $p$ for $\lambda$ large enough.
\end{lemma}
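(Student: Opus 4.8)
The plan is to compare the curvature tensor of $\widetilde G := G + \lambda H$ with that of $H$ as $\lambda \to \infty$, and then to exploit the scale-invariance of holomorphic sectional curvature in order to reduce the whole matter to a compact set of tangent directions. In local coordinates near $p$ write $\widetilde g_{i\overline j} = g_{i\overline j} + \lambda h_{i\overline j}$, and let $R^{H}_{i\overline j k\overline l}$ denote the curvature components of $H$. Substituting $\widetilde g_{i\overline j}$ into \eqref{curv_formula}: the term $-\partial^{2}\widetilde g_{i\overline j}/\partial z_{k}\partial\overline z_{l}$ equals $-\lambda\,\partial^{2}h_{i\overline j}/\partial z_{k}\partial\overline z_{l}$ plus a $\lambda$-independent term; and since $(\widetilde g_{i\overline j}) = \lambda\,(h_{i\overline j} + \lambda^{-1}g_{i\overline j})$ we have $\widetilde g^{p\overline q} = \tfrac{1}{\lambda}\big((h_{\bullet\bullet} + \lambda^{-1}g_{\bullet\bullet})^{-1}\big)^{p\overline q}$, where the matrix $\big(h_{\bullet\bullet} + \lambda^{-1}g_{\bullet\bullet}\big)^{-1}$ depends smoothly on $\lambda^{-1} \in [0,\infty)$ (its determinant being positive throughout) with value $\big(h^{p\overline q}\big)$ at $\lambda^{-1} = 0$; thus $\lambda\,\widetilde g^{p\overline q}$ stays bounded and converges to $h^{p\overline q}$. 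Expanding $\partial\widetilde g_{i\overline p}/\partial z_{k} = \partial g_{i\overline p}/\partial z_{k} + \lambda\,\partial h_{i\overline p}/\partial z_{k}$ (and likewise the barred factor) and collecting powers of $\lambda$, the only summand of order $\lambda$ in $\sum_{p,q}\widetilde g^{p\overline q}(\partial\widetilde g_{i\overline p}/\partial z_{k})(\partial\widetilde g_{q\overline j}/\partial\overline z_{l})$ is $\lambda\sum_{p,q}h^{p\overline q}(\partial h_{i\overline p}/\partial z_{k})(\partial h_{q\overline j}/\partial\overline z_{l})$, every other summand being bounded as $\lambda \to \infty$. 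Hence
\[
\widetilde R_{i\overline j k\overline l}(p) = \lambda\,R^{H}_{i\overline j k\overline l}(p) + B_{i\overline j k\overline l}(\lambda), \qquad B_{i\overline j k\overline l}(\lambda)\ \text{bounded as } \lambda \to \infty.
\]

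Now, holomorphic sectional curvature being invariant under rescaling of the tangent vector, it suffices to prove $K_{\widetilde G}(\xi) > 0$ for every $\xi$ in the compact set $S := \{\xi \in T_{p}M : \sum_{i,j} h_{i\overline j}(p)\,\xi_{i}\overline\xi_{j} = 1\}$. For $\xi \in S$ one has $2\sum_{i,j,k,l} R^{H}_{i\overline j k\overline l}(p)\,\xi_{i}\overline\xi_{j}\xi_{k}\overline\xi_{l} = K_{H}(\xi)$ (the denominator in \eqref{hol_sect_curv_def} being $1$), so by the first step the numerator of \eqref{hol_sect_curv_def} for $\widetilde G$ satisfies
\[
2\sum_{i,j,k,l}\widetilde R_{i\overline j k\overline l}(p)\,\xi_{i}\overline\xi_{j}\xi_{k}\overline\xi_{l} = \lambda\,K_{H}(\xi) + O(1)
\]
uniformly over $\xi \in S$. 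By hypothesis $K_{H} > 0$ at $p$, so compactness of $S$ gives $m := \min_{\xi \in S} K_{H}(\xi) > 0$ and the numerator is $\geq \lambda m - C_{1}$ for some $C_{1}$ independent of $\lambda \geq 1$. The denominator of \eqref{hol_sect_curv_def} for $\widetilde G$ at $\xi \in S$ is $\big(\sum \widetilde g_{i\overline j}(p)\,\xi_{i}\overline\xi_{j}\big)^{2} = \big(\lambda + \sum g_{i\overline j}(p)\,\xi_{i}\overline\xi_{j}\big)^{2} \leq (\lambda + C_{2})^{2}$, again by compactness. Therefore $K_{\widetilde G}(\xi) \geq (\lambda m - C_{1})/(\lambda + C_{2})^{2}$, which is positive for all $\xi \in S$ as soon as $\lambda > C_{1}/m$; rescaling an arbitrary nonzero vector to unit $H$-norm then gives $K_{\widetilde G}(\xi) > 0$ for every nonzero $\xi \in T_{p}M$.

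The expansion of the matrix inverse $\big(g_{\bullet\bullet} + \lambda h_{\bullet\bullet}\big)^{-1}$ and the bookkeeping of which terms of \eqref{curv_formula} carry a factor of $\lambda$ are routine; the one point that truly requires care is confirming that $B_{i\overline j k\overline l}(\lambda)$ is bounded uniformly in $\lambda$ — equivalently, that $\lambda\,\widetilde g^{p\overline q}$ stays bounded — which is precisely where the smooth dependence of $\big(h_{\bullet\bullet} + \lambda^{-1}g_{\bullet\bullet}\big)^{-1}$ on $\lambda^{-1}$ near $\lambda^{-1} = 0$ is used. Everything else is a compactness argument on the $H$-unit sphere in $T_{p}M$.
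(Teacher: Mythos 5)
Your argument is correct, but it takes a genuinely different route from the paper's. The paper fixes a unit direction $t$, passes to Wu's normal coordinates for $H$ at $p$, restricts $G+\lambda H$ to the one\nobreakdash-dimensional submanifold $M'=\{z_1=\dots=z_{n-1}=0\}$ tangent to $t$, invokes the decreasing property of holomorphic sectional curvature on submanifolds together with the identification of the restricted curvature with the Gaussian curvature of $(g_{n\overline n}+\lambda h_{n\overline n})\,dz_n\otimes d\overline z_n$, reads off from the explicit one\nobreakdash-variable formula \eqref{HSC_lambda_depend} that the sign is governed by $K(H,t)(p)$ for large $\lambda$, and only at the end uses compactness of the $H$-unit sphere (together with monotonicity in $\lambda$) to produce a single $\lambda$ valid in all directions. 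You instead expand the full curvature tensor of $G+\lambda H$ directly from \eqref{curv_formula}, obtaining $\widetilde R_{i\overline j k\overline l}=\lambda R^H_{i\overline j k\overline l}+O(1)$ --- the key point, which you correctly isolate, being that $\lambda\,\widetilde g^{p\overline q}=\bigl[(h_{\bullet\bullet}+\lambda^{-1}g_{\bullet\bullet})^{-1}\bigr]^{p\overline q}$ stays bounded and tends to $h^{p\overline q}$ --- and then a single compactness argument on the $H$-unit sphere yields the uniform lower bound $(\lambda m-C_1)/(\lambda+C_2)^2$. Your version is more self-contained (no appeal to Wu's lemmas or to the curvature-decreasing property of submanifolds) and it delivers directly the quantitative growth $\sum\widetilde R_{i\overline j k\overline l}\,\xi_i\overline\xi_j\xi_k\overline\xi_l\geq O(\lambda)\sum\xi_i\overline\xi_i\xi_j\overline\xi_j$, which is precisely what Remark \ref{Tensor_sum_order} later extracts from the paper's proof for use in Section \ref{proof_mthm}; the paper's version, in exchange, produces the explicit formula \eqref{HSC_lambda_depend}, whose monotonicity in $\lambda$ is reused verbatim in the final step of the proof of Theorem \ref{mthm}.
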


\begin{proof}
For the given point $p \in M$ and a unit tangent vector $t$ at $p$, we choose local coordinates $(z_1,\ldots,z_n)$ at $p$ which satisfy the conditions in \cite[Lemma 3]{Wu} with respect to $H$, i.e.,

\begin{enumerate}
  \item $z_1(p) = \ldots = z_n(p) = 0.$
  \item If $H = \sum_{i,j=1}^n h_{i\bar j}\ dz_i \otimes d\bar z_j$, then $h_{i\bar j}(p) = \delta_{i\bar j}$ and
        \[\frac{\partial h_{i\bar j}}{\partial z_n}(p) = \frac{\partial h_{i\bar j}}{\partial\bar z_n}(p) = 0,\]
        for $1 \leq i, \, j \leq n$.
  \item $t = \frac{\partial}{\partial z_n}(p).$
\end{enumerate}

\par Consider the 1-dimensional complex submanifold $M^{\prime} = \{z_1 = \ldots = z_{n-1} = 0\}$, which is tangent to $t$. Using  \cite[Lemma 4]{Wu}, the Gaussian curvature of $M^{\prime}$ at $p$ with respect to the induced metric $H^{\prime} \big(= H|_{M^{\prime}} = h_{n\bar n}\ dz_n \otimes d\bar z_n\big)$ equals the holomorphic sectional curvature at $p$ with respect to $H$ in the direction of $t$, denoted by $K(H,t)(p)$. Moreover, if $G = \sum_{i,j=1}^n g_{i\bar j} \, dz_i \otimes d\bar z_j$, then the induced metric of $G$ on $M^{\prime}$ is given by $G^{\prime} = G|_{M^{\prime}} = g_{n\bar n} dz_n \otimes d\bar z_n$. Let us denote $g = g_{n\bar n}$, $h = h_{n\bar n}$, and $z = z_n$. Then, $G^{\prime} + \lambda H^{\prime} = (g + \lambda h) \, dz \otimes d\bar z$ is the induced metric of $G + \lambda H$ on $M^{\prime}$. The holomorphic sectional curvature at $p$ with respect to $G^{\prime} + \lambda H^{\prime}$ is given by

\begin{equation} \label{HSC_lambda_depend_comp}
\begin{aligned}
   &   K\big(G^{\prime} + \lambda H^{\prime}\big)(p) \\
        =\ & \frac{2}{\big(g(p)+\lambda h(p)\big)^2} \Bigg(-\frac{\partial^2 \big(g+\lambda h\big)}{\partial z \partial \bar z}(p) \\
           & + \frac{1}{\big(g(p)+\lambda h(p)\big)} \frac{\partial \big(g+\lambda h\big)}{\partial z}(p) \frac{\partial \big(g+\lambda h\big)}{\partial\bar z}(p)\Bigg)\\
        =\ & \frac{2}{\big(g(p)+\lambda h(p)\big)^3} \Bigg(-\big(g(p)+\lambda h(p)\big)\bigg(\frac{\partial^2 g}{\partial z \partial\bar z}(p) +\lambda \frac{\partial^2 h}{\partial z \partial\bar z}(p)\bigg) \\
           & + \bigg(\frac{\partial g}{\partial z}(p) + \lambda\frac{\partial h}{\partial z}(p)\bigg)\bigg(\frac{\partial g}{\partial\bar z}(p) + \lambda\frac{\partial h}{\partial\bar z}(p)\bigg)\Bigg)\\
        =\ & \frac{1}{\big(g(p)+\lambda h(p)\big)^3} \Bigg(2g(p)\bigg(-\frac{\partial^2 g}{\partial z \partial\bar z}(p) + \frac{1}{g(p)} \frac{\partial g}{\partial z}(p) \frac{\partial g}{\partial\bar z}(p)\bigg) \\
           & + 2\lambda^2h(p)\bigg(-\frac{\partial^2 h}{\partial z \partial\bar z}(p) + \frac{1}{h(p)} \frac{\partial h}{\partial z}(p) \frac{\partial h}{\partial\bar z}(p)\bigg) \\
           & + 2\lambda \bigg(-h(p)\frac{\partial^2 g}{\partial z \partial\bar z}(p) - g(p)\frac{\partial^2 h}{\partial z \partial\bar z}(p) + \frac{\partial g}{\partial z}(p) \frac{\partial h}{\partial\bar z}(p) + \frac{\partial h}{\partial z}(p) \frac{\partial g}{\partial\bar z}(p)\bigg)\Bigg).
\end{aligned}
\end{equation}
Therefore,
\begin{equation} \label{HSC_lambda_depend}
\begin{aligned}
  & K\big(G^{\prime} + \lambda H^{\prime}\big)(p) \\
     =\ & \frac{1}{\big(g(p)+\lambda h(p)\big)^3} \Bigg(g(p)^3 K(G^{\prime})(p) + \lambda^2 h(p)^3 K(H^{\prime})(p) \\
        & + 2\lambda \bigg(-h(p)\frac{\partial^2 g}{\partial z \partial\bar z}(p) - g(p)\frac{\partial^2 h}{\partial z \partial\bar z}(p) + \frac{\partial g}{\partial z}(p) \frac{\partial h}{\partial\bar z}(p) + \frac{\partial h}{\partial z}(p) \frac{\partial g}{\partial\bar z}(p)\bigg)\Bigg),
\end{aligned}
\end{equation}
where $K(G^{\prime})(p)$ and $K(H^{\prime})(p)$ are the holomorphic sectional curvatures at $p$ with respect to $G^{\prime}$ and $H^{\prime}$, respectively. The choice of $M^{\prime}$ was such that $K(H^{\prime})(p) = K(H,t)(p)$. Moreover, the decreasing property of holomorphic sectional curvature on submanifolds implies that $K\big(G + \lambda H, t\big)(p) \geq K\big(G^{\prime} + \lambda H^{\prime}\big)(p)$, where $K(G + \lambda H, t)(p)$ denotes the holomorphic sectional curvature at $p$ with respect to $G + \lambda H$ in the direction of $t$. Therefore, \eqref{HSC_lambda_depend} implies that

\begin{equation}\label{HSC_curv_dec}
\begin{aligned}
     &   K\big(G + \lambda H, t\big)(p) \\
      \geq\ &  K\big(G^{\prime} + \lambda H^{\prime}\big)(p) \\
         =\ & \frac{1}{\big(g(p)+\lambda h(p)\big)^3} \Bigg(g(p)^3 K(G^{\prime})(p) + \lambda^2 h(p)^3 K(H,t)(p) + 2\lambda \bigg(-h(p)\frac{\partial^2 g}{\partial z \partial\bar z}(p) \\
            & - g(p)\frac{\partial^2 h}{\partial z \partial\bar z}(p) + \frac{\partial g}{\partial z}(p) \frac{\partial h}{\partial\bar z}(p) + \frac{\partial h}{\partial z}(p) \frac{\partial g}{\partial\bar z}(p)\bigg)\Bigg).
\end{aligned}
\end{equation}

\par If $\lambda$ is large enough, then the sign of the expression on the right hand side is determined by the sign of $K(H,t)(p)$, which is positive by assumption. Hence, the holomorphic sectional curvature at $p$ with respect to $G + \lambda H$ is positive in the direction of $t$ for a sufficiently large value of $\lambda$, say $\lambda_t$, i.e.,
\[K(G + \lambda_t H, t)(p) > 0.\]\par

It is clear from \eqref{HSC_curv_dec} that for any $\lambda > \lambda_t, \; K(G + \lambda H,t)(p)$ is still positive. Finally, we seek a $\lambda$ which works for all tangent vectors at $p$. Any tangent vector at $p$ is a scalar multiple of some unit tangent vector at $p$ (with respect to $H$). Therefore, it is enough to consider $\lambda_t$ for $t \in S^1\big(T_pM\big) = \{t \in T_pM \, : \; H(t,t) = 1\}$. Since $S^1\big(T_pM\big)$ is compact, we conclude that there exists a $\lambda$ which works for all $t$, i.e., $K(G + \lambda H, t)(p)$ is positive for any choice of tangent vector $t$ at $p$. Thus, the holomorphic sectional curvature at $p$ with respect to $G + \lambda H$ is positive in all the directions.
\end{proof}

\begin{remark} \label{Tensor_sum_order}
\par The right hand side of the inequality \eqref{HSC_curv_dec} is $O(\lambda^{-1})$. Thus, if we write $R_{i\bar jk\bar l}$ for the coefficients of the curvature tensor of the metric $G + \lambda H$, the formula \eqref{hol_sect_curv_def} of holomorphic sectional curvature implies that
\begin{equation}\label{lambda_exp} \frac{2\sum_{i,j,k,l=1}^n R_{i\bar jk\bar l}(p)\ \xi_i\bar\xi_j\xi_k\bar\xi_l}{\sum_{i,j,k,l=1}^n \big(g_{i\bar j}(p) + \lambda h_{i\bar j}(p)\big)\big(g_{k\bar l}(p) + \lambda h_{k\bar l}(p)\big)\ \xi_i\bar\xi_j\xi_k\bar\xi_l} \geq O(\lambda^{-1}),\end{equation}
which further implies that
\begin{equation} \label{key_remark}
\sum_{i,j,k,l=1}^n R_{i\bar jk\bar l}(p) \xi_i\bar\xi_j\xi_k\bar\xi_l \geq O(\lambda) \sum_{i,j=1}^n \xi_i\bar\xi_i\xi_j\bar\xi_j.
\end{equation}
\end{remark}

\section{Proof of Theorem \ref{mthm}}\label{proof_mthm}
Suppose $\{G_t\}$ is a smooth family of Hermitian metrics with positive holomorphic sectional curvature on each fiber, and $\varphi_t$ is the Hermitian form associated to the metric $G_t$. Fix a Hermitian metric $\widetilde G$ on $X$. For two vector fields $Z_1$ and $Z_2$ on $X$ of type $(1,0)$ and $(0,1)$, respectively, we define a (1,1)-form  $\widetilde\Phi$ at a point $p \in \pi^{-1}(t)$ as follows:
\[\widetilde\Phi(Z_1,Z_2)(p) \equiv \varphi_t \big(\text{proj}_{\widetilde G} Z_1(p), \text{proj}_{\widetilde G} Z_2(p)\big),\]
where proj$_ {\widetilde G}$ is the projection onto the fiber direction with respect to the metric $\widetilde G $. Then clearly $\widetilde\Phi$ is a $C^\infty$, semi-positive definite Hermitian (1,1)-form defined on $X$, and $\widetilde\Phi$ restricted to each fiber is equal to $\varphi_t$, which is positive definite.
\par Suppose $\omega_Y$ is the associated (1,1)-form of the Hermitian metric on $Y$ with positive holomorphic sectional curvature. Then for any positive value of $\mu$, $\widetilde\Phi + \mu \pi^*\omega_Y$ is a positive definite Hermitian (1,1)-form defined on $X$. We fix one such $\mu_0$, and consider the Hermitian metric on $X$ with the associated (1,1)-form given by $\Phi = \widetilde\Phi + \mu_0 \pi^*\omega_Y$. As mentioned in the definition of a Hermitian metric in Section \ref{sec_def}, we shall not distinguish between the associated (1,1)-form of a Hermitian metric and the Hermitian metric itself. We want to show that the Hermitian metric on $X$ defined by $\Psi_\lambda = \Phi + \lambda \pi^*\omega_Y$ has positive holomorphic sectional curvature on $X$ if $\lambda$ is chosen large enough.
\par Let $p$ be a point in $X$ which lies in some fiber $X_0$. Since $\pi$ is of maximal rank everywhere, locally there is a neighborhood $U$ of $p$ such that $U = W \times V$, where $V$ is a neighborhood of $\pi(p)$ in $Y$, and $W$ is a neighborhood of $p$ in the fiber $X_0$. We may assume $V$ and $W$ are coordinate neighborhoods with local coordinates $(z_{s+1},\ldots,z_n)$ in $V$ and $(z_1,\ldots,z_s)$ in $W$. Then, $(z_1,\ldots,z_n)$ is a coordinate system around $p$ in $U$. For computational purposes, we will choose $(z_{s+1},\ldots,z_n)$ such that $\frac{\partial}{\partial z_{s+1}}, \ldots, \frac{\partial}{\partial z_n}$ are orthonormal at $\pi(p)$ with respect to the metric $\omega_Y$. Let
\[\widetilde{\Phi} = \frac{\sqrt{-1}}{2} \sum_{i,j=1}^n g_{i \bar j}(z_1,\ldots,z_n) \, dz_i \wedge d\bar{z}_j,\]
and
\[\omega_Y = \frac{\sqrt{-1}}{2} \sum_{\alpha, \beta = s+1}^n \widetilde{g}_{\alpha \bar \beta} (z_{s+1}, \ldots, z_n) \, dz_{\alpha} \wedge d\bar{z}_{\beta}, \qquad \text{with} \; \widetilde{g}_{\alpha \bar \beta}(p) = \delta_{\alpha \bar \beta}.\]
Consequently, the Hermitian metric $\Psi_\lambda$, after replacing w.l.o.g. $\lambda + \mu_0$ with $\lambda$, is given by
\begin{equation*}
\begin{aligned}
  & \sum_{i,j=1}^n h_{i\bar j} \, dz_i \otimes d\bar z_j \\
     =\ & \bigg(\sum_{i,j=1}^s g_{i\bar  j} \, dz_i \otimes d\bar z_j + \sum_{i=1}^s \sum_{\beta=s+1}^n g_{i\bar\beta} \, dz_i \otimes d\bar z_\beta + \sum_{\alpha=s+1}^n \sum_{j=1}^s g_{\alpha\bar j} \, dz_\alpha \otimes d\bar z_j \\
       & + \sum_{\alpha,\beta = s+1}^n \big(g_{\alpha\bar\beta} + \lambda\widetilde g_{\alpha\bar\beta}\big) \, dz_\alpha\otimes d\bar z_\beta\bigg).
\end{aligned}
\end{equation*}

\par Let $A$ be the $s \times s$ matrix with coefficients $g_{a\bar b}(p)$ for $1 \leq a, \, b \leq s$, and $A_{ab}$ be the $(a,b)^{th}$ cofactor of the matrix $A$. Then, using the following formula to compute the determinant of a block matrix:
\[\det\begin{pmatrix}
   P & Q \\
   R & S
\end{pmatrix} = \det(P)\,\det(S-RP^{-1}Q),\]
we obtain the following expressions for the coefficients of the inverse matrix, with the assumption that $1 \leq a,b \leq s$, and $s+1 \leq \chi,\eta \leq n$,
\begin{equation}\label{coefficients_inverse}
\begin{aligned}
   h^{a\bar b}(p) =\ & \frac{\lambda^{n-s}\det A_{ab} + O(\lambda^{n-s-1})}{\lambda^{n-s}\det A + O(\lambda^{n-s-1})}, \\
   h^{\chi\bar\chi}(p) =\ & \frac{\lambda^{n-s-1}\det A + O(\lambda^{n-s-2})}{\lambda^{n-s}\det A + O(\lambda^{n-s-1})}, \\
   h^{a\bar\eta}(p) =\ h^{\chi\bar b}(p) =\ & O(\lambda^{-1}), \qquad \text{and} \qquad h^{\chi\bar\eta}(p) =\ O(\lambda^{-2}), \qquad \chi \neq \eta.
\end{aligned}
\end{equation}\par 

Now, we check the conditions of Lemma \ref{lem1}:
\begin{enumerate}
  \item For $1 \leq i,j,k,l \leq s$, the formula \eqref{curv_formula} implies that the components of the curvature tensor are given by
          \begin{equation*}
           \begin{aligned}
              R_{i\bar jk\bar l}(p) =\ & -\frac{\partial^2 g_{i\bar j}}{\partial z_k \partial\bar z_l}(p) + \sum_{a,b=1}^s h^{a\bar b}(p) \frac{\partial g_{i\bar a}}{\partial z_k}(p) \frac{\partial g_{b\bar j}}{\partial\bar z_l}(p) \\
                 & + \sum_{\neg(u,v \leq s)} h^{u\bar v}(p) \frac{\partial g_{i\bar u}}{\partial z_k}(p) \frac{\partial g_{v\bar j}}{\partial \bar z_l}(p).
           \end{aligned}
           \end{equation*}
          The dependence of the coefficients of the inverse matrix on $\lambda$, as given in \eqref{coefficients_inverse}, reduces the above expression to
         \begin{equation*}
            \begin{aligned}
              R_{i\bar jk\bar l}(p) = & -\frac{\partial^2 g_{i\bar j}}{\partial z_k \partial\bar z_l}(p) + \sum_{a,b=1}^s h^{a\bar b}(p) \frac{\partial g_{i\bar a}}{\partial z_k}(p) \frac{\partial g_{b\bar j}}{\partial\bar z_l}(p) + O(\lambda^{-1}).
            \end{aligned}
         \end{equation*}\par
         We note that $\lim_{\lambda \rightarrow \infty} h^{a\bar b}(p) = \frac{\det A_{ab}}{\det A}$ and therefore          
         \begin{equation}\label{lem1_cond1a}
           \begin{aligned}
                      & \lim_{\lambda \rightarrow \infty} R_{i\bar jk\bar l}(p) \\
                  = & \lim_{\lambda \rightarrow \infty}\Bigg(-\frac{\partial^2 g_{i\bar j}}{\partial z_k \partial\bar z_l}(p) + \sum_{a,b=1}^s h^{a\bar b}(p) \frac{\partial g_{i\bar a}}{\partial z_k}(p) \frac{\partial g_{b\bar j}}{\partial\bar z_l}(p) + O(\lambda^{-1})\Bigg) \\
                  = & -\frac{\partial^2 g_{i\bar j}}{\partial z_k \partial\bar z_l}(p) + \sum_{a,b=1}^s \frac{\det A_{ab}}{\det A} \frac{\partial g_{i\bar a}}{\partial z_k}(p) \frac{\partial g_{b\bar j}}{\partial\bar z_l}(p).
           \end{aligned}
         \end{equation}
          The expression
             \begin{equation}\label{lem1_cond1b}
              2\sum_{i,j,k,l = 1}^s \Bigg(-\frac{\partial^2 g_{i\bar j}}{\partial z_k \partial\bar z_l}(p) + \sum_{a,b=1}^s \frac{\det A_{ab}}{\det A} \frac{\partial g_{i\bar a}}{\partial z_k}(p) \frac{\partial g_{b\bar j}}{\partial\bar z_l}(p)\Bigg) \, \xi_i\bar\xi_j\xi_k\bar\xi_l
           \end{equation}
is the numerator of the holomorphic sectional curvature of $\varphi_t$ on the fiber $\pi^{-1}(t)$ which, by assumption, is positive if the vector $(\xi_1,\ldots,\xi_s)\not = 0$. Comparing \eqref{lem1_cond1a} and \eqref{lem1_cond1b}, we conclude that for a sufficiently large choice of $\lambda$, the expression
  \begin{equation*}
          \begin{aligned}
            & \sum_{i,j,k,l = 1}^s R_{i\bar jk\bar l} \xi_i\bar\xi_j\xi_k\bar\xi_l \\
             =\ & \sum_{i,j,k,l=1}^s \bigg(-\frac{\partial^2 g_{i\bar j}}{\partial z_k \partial\bar z_l}(p) + \sum_{a,b=1}^s h^{a\bar b}(p) \frac{\partial g_{i\bar a}}{\partial z_k}(p) \frac{\partial g_{b\bar j}}{\partial\bar z_l}(p) \\
               & + O(\lambda^{-1})\bigg) \xi_i\bar\xi_j\xi_k\bar\xi_l
          \end{aligned}
          \end{equation*}
is positive if the vector $(\xi_1,\ldots,\xi_s)\not = 0$. This proves that the first condition of Lemma \ref{lem1} is satisfied for $\lambda$ sufficiently large.
  
\item If $\min(i,j,k,l) \leq s$ and $\max(i,j,k,l) > s$, then the dependence of the coefficients of the inverse matrix on $\lambda$, as described in \eqref{coefficients_inverse}, and the fact that the $\widetilde{g}_{\alpha\bar\beta}$ are functions of $z_{s+1},\ldots,z_n$ only, imply that the formula \eqref{curv_formula} gives $|R_{i\bar jk\bar l}(p)| \leq O(1)$ when $\min(i,j,k,l) \leq s$ and $\max(i,j,k,l) > s$. Therefore, the second condition of Lemma \ref{lem1} is also satisfied when $\lambda$ is large enough.

\item $s+1 \leq i,j,k,l \leq n$: In this case, we consider the submanifold $M^\prime$ around $p$ defined by $\{z_1 = \ldots = z_s = 0\}$. Let $G^\prime$ be the induced metric of $\Phi$ on $M^\prime$, and $\widetilde G^\prime$ be the induced metric of $\pi^*\omega_Y$ on $M^\prime$ so that (replacing $\lambda + \mu_0$ by $\lambda$ again)
 \[G^\prime + \lambda\widetilde G^\prime = \sum_{\alpha,\beta = s+1}^n \big(g_{\alpha\bar\beta} + \lambda \widetilde g_{\alpha\bar\beta}\big) \, dz_\alpha \otimes d\bar z_\beta\]
is the induced metric of $\Psi_\lambda$ on $M^\prime$. Clearly, $\widetilde G^\prime$ has positive holomorphic sectional curvature at $p \in M^\prime$. Lemma \ref{lem2} implies that $G^\prime + \lambda\widetilde G^\prime$ also has positive holomorphic sectional curvature at $p$ for a sufficiently large choice of $\lambda$. Therefore, the numerator of \eqref{hol_sect_curv_def} is positive with respect to $G^\prime + \lambda\widetilde G^\prime$, i.e., if $R^\prime_{\alpha\bar\beta\gamma\bar\delta}$ denote the components of the curvature tensor (for $\alpha,\beta,\gamma,\delta = s+1,\dots,n$) with respect to the induced metric $G^\prime + \lambda\widetilde G^\prime$, then
\[\sum_{\alpha,\beta,\gamma,\delta = s+1}^n R^\prime_{\alpha\bar\beta\gamma\bar\delta}(p) \xi_\alpha\bar\xi_\beta\xi_\gamma\bar\xi_\delta > 0\]
for $(\xi_{s+1},\ldots,\xi_n)\not = 0$. \par
The decreasing property of holomorphic sectional curvature on submanifolds implies that
   \begin{equation*}
        \begin{aligned}
          K\Big(\Psi_\lambda, (0,\ldots,0,\xi_{s+1},\ldots,\xi_n)\Big)(p) \geq\ & K\Big(\Psi_\lambda|_{M^\prime}, (\xi_{s+1},\ldots,\xi_n)\Big)(p) \\
             =\ & K\big(G^\prime + \lambda\widetilde G^\prime, (\xi_{s+1},\ldots,\xi_n)\big)(p),
        \end{aligned}
        \end{equation*}
     i.e.,
     \begin{equation*}
        \begin{aligned}
               & \frac{2\sum_{\alpha,\beta,\gamma,\delta = s+1}^n R_{\alpha\bar\beta\gamma\bar\delta}(p) \, \xi_\alpha\bar\xi_\beta\xi_\gamma\bar\xi_\delta}{\sum_{\alpha,\beta,\gamma,\delta = s+1}^n \big(g_{\alpha\bar\beta}(p) + \lambda\widetilde g_{\alpha\bar\beta}(p)\big)\big(g_{\gamma\bar\delta}(p) + \lambda\widetilde g_{\gamma\bar\delta}(p)\big) \, \xi_\alpha\bar\xi_\beta\xi_\gamma\bar\xi_\delta} \\
          \geq\ & \frac{2\sum_{\alpha,\beta,\gamma,\delta = s+1}^n R^\prime_{\alpha\bar\beta\gamma\bar\delta}(p) \, \xi_\alpha\bar\xi_\beta\xi_\gamma\bar\xi_\delta}{\sum_{\alpha,\beta,\gamma,\delta = s+1}^n \big(g_{\alpha\bar\beta}(p) + \lambda\widetilde g_{\alpha\bar\beta}(p)\big)\big(g_{\gamma\bar\delta}(p) + \lambda\widetilde g_{\gamma\bar\delta}(p)\big) \, \xi_\alpha\bar\xi_\beta\xi_\gamma\bar\xi_\delta},
        \end{aligned}
        \end{equation*}
  which implies that
    \begin{equation} \label{tensor_dec}
            \sum_{\alpha,\beta,\gamma,\delta = s+1}^n R_{\alpha\bar\beta\gamma\bar\delta}(p) \, \xi_\alpha\bar\xi_\beta\xi_\gamma\bar\xi_\delta \geq \sum_{\alpha,\beta,\gamma,\delta = s+1}^n R^\prime_{\alpha\bar\beta\gamma\bar\delta}(p) \, \xi_\alpha\bar\xi_\beta\xi_\gamma\bar\xi_\delta.
         \end{equation}
The expression on the right hand side of the above inequality is positive for $\lambda$ sufficiently large and for $(\xi_{s+1},\ldots,\xi_n)\not = 0$. Therefore, the expression on the left hand side is also positive. This proves the third and last condition of Lemma \ref{lem1} for $\lambda$ sufficiently large.
\end{enumerate}\par 
According to \eqref{key_remark} in Remark \ref{Tensor_sum_order}, the right hand side of the inequality \eqref{tensor_dec}, and consequently the left hand side of the inequality, in fact satisfy a stronger positivity statement in terms of $\lambda$, namely
\[\sum_{\alpha,\beta,\gamma,\delta = s+1}^n R_{\alpha\bar\beta\gamma\bar\delta}(p) \, \xi_\alpha\bar\xi_\beta\xi_\gamma\bar\xi_\delta \geq\ O(\lambda) \sum_{\alpha, \beta = s+1}^n \xi_\alpha\bar\xi_\alpha\xi_\beta\bar\xi_\beta.\]
Therefore, the inequality $K_2/K_1 \geq \mathcal{K}$ in the statement of Lemma \ref{lem1} is satisfied for $\lambda$ large enough, and hence, the lemma implies that $\Psi_\lambda$ has positive holomorphic sectional curvature at $p$ for $\lambda$ sufficiently large.
\par Let $U_{p,\lambda}$ be a neighborhood of $p$ in which the holomorphic sectional curvature with respect to $\Psi_\lambda$ is positive everywhere, i.e., the holomorphic sectional curvature at every point $q \in U_{p,\lambda}$ is positive in all the directions. Then, \cite[Lemma 4]{Wu} implies that the holomorphic sectional curvature at $q$ in a direction $t \in T_qX$ (with respect to $\Psi_\lambda$) is equal to the Gaussian curvature at $q$ with respect to the induced metric of $\Psi_\lambda$ on a 1-dimensional submanifold tangent to $t$. Dependence of the Gaussian curvature on $\lambda$ as in \eqref{HSC_curv_dec} implies that if $K(\Psi_\lambda, t)(q) > 0$, then $K(\Psi_{\lambda^\prime}, t)(q) > 0$ for any $\lambda^\prime > \lambda$.
\par Consequently, using the compactness property of $X$, we conclude that there exists a sufficiently large value of $\lambda$ such that the holomorphic sectional curvature of $X$ with respect  to $\Psi_\lambda$ is positive everywhere.\qed

\section{Remarks on the case when $\pi$ is not everywhere a submersion}\label{singularity}
Suppose the holomorphic map $\pi:\ X \rightarrow Y$ in Theorem \ref{mthm} is not everywhere a submersion, i.e., there exists at least one singular fiber. We would like to briefly explain why our method in general fails in this case. Note that for any positive number $\mu$, the Hermitian $(1,1)$-form $\widetilde \Phi + \mu\pi^*\omega_Y$ is positive definite at those points where $\pi$ is a submersion and semi-positive definite where it is not a submersion. Let us denote by $X_\text{sub}$ the set of all points of $X$ where $\pi$ is a submersion. For every $p \in X_\text{sub}$, we proved the existence of a (large) value of $\lambda$ such that there exists a neighborhood $U_{p,\lambda}$ of $p$ in which the holomorphic sectional curvature with respect to $\Psi_\lambda= \Phi + \lambda \pi^*\omega_Y$ is positive. We proved the above statement using the inequality \eqref{key_remark} to apply Lemma \ref{lem1}. Now, the key point is that the expression \eqref{lambda_exp} shows that as $p$ goes to the boundary of $X_\text{sub}$ and therefore $\det (\tilde g_{\alpha\bar\beta}(p))$ goes to zero, the value of $\lambda$ may have to be chosen larger and larger, with no finite limit existing,  in order to obtain the inequality \eqref{key_remark}. Thus, a uniform form $\Psi_\lambda$ cannot necessarily be chosen on all of $X_\text{sub}$. We can only do so on a compact subset of $X_\text{sub}$.\par

In other words, it is crucial for our method that the Rank Theorem can be applied to $\pi$ everywhere on $X$. To illustrate this point, we give two local examples in which $\pi$ is not a submersion and our method fails. In the first example, $\pi$ has a multiple fiber of multiplicity $k\geq 2$. In the second, $\pi$ has a singular fiber with a nodal singularity.

\begin{example}\label{multiple_fiber}
Let $X = D_1 \times D_2$ be the bi-disk, $Y = D_2$, and $(z_1,z_2)$ be the coordinate system in $X$. Assume that the holomorphic map $\pi: X \rightarrow Y$ is given by $(z_1,z_2) \mapsto z_2^k$, for some fixed integer $k\geq 2$, which is surjective and a submersion at all points except for the central fiber $D_1 \times \{0\}$. On the fibers, we consider the restriction of the following tensor on $X$ (which is a constant multiple of the Fubini-Study metric along each fiber) varying smoothly with respect to the base points:
\begin{equation*}
\Phi =\ \frac{1+z_2\bar z_2}{(1+z_1\bar z_1)^2}\, dz_1 \otimes d\bar z_1.
\end{equation*}
On the base $Y$, we consider the Fubini-Study metric given by 
\begin{equation*}
\omega_Y =\ \frac{1}{(1+z_2\bar z_2)^2}\, dz_2 \otimes d\bar z_2.
\end{equation*}
The warped product metric on $X\setminus\{z_2=0\}$ used in our proof then is the following:
\begin{equation*}
\begin{aligned}
 \Psi_\lambda  =&\  \Phi + \lambda\pi^*\omega_Y \\
    = &\ \frac{1+z_2\bar z_2}{(1+z_1\bar z_1)^2}\, dz_1 \otimes d\bar z_1 +\lambda \frac{k^2z_2^{k-1}\bar z_2^{k-1}}{(1+z_2^k\bar z_2^k)^2}\, dz_2 \otimes d\bar z_2.
\end{aligned}
\end{equation*}	

An explicit computation yields that the only non-zero coefficients of the curvature tensor are $R_{1\bar11\bar1}, R_{1\bar12\bar 2},$ and $R_{2\bar22\bar2}$, and the holomorphic sectional curvature of $X$ at a point $(z_1,z_2)$ ($z_2\not = 0$) with respect to $\Psi_\lambda$ in the direction of a unit tangent vector $\xi = \xi_1\frac{\partial}{\partial z_1} + \xi_2\frac{\partial}{\partial z_2}$ is given by
\begin{equation}\label{hsc_f-s}
\begin{aligned}
   K(\xi) =\ & 2\sum_{i,j,k,l=1}^2 R_{i\bar jk\bar l}(z_1,z_2) \xi_i\bar\xi_j\xi_k\bar\xi_l \\
          =\ & 2\big(R_{1\bar11\bar1}\xi_1\bar\xi_1\xi_1\bar\xi_1 + R_{1\bar12\bar2}\xi_1\bar\xi_1\xi_2\bar\xi_2 + R_{2\bar22\bar2}\xi_2\bar\xi_2\xi_2\bar\xi_2\big) \\
          =\ & 2\Bigg(\frac{2(1+z_2\bar z_2)}{(1+z_1\bar z_1)^4} \xi_1\bar\xi_1\xi_1\bar\xi_1 - \frac{1}{(1+z_1\bar z_1)^2(1+z_2\bar z_2)}\xi_1\bar\xi_1\xi_2\bar\xi_2 \\
             & + \lambda \frac{2k^4z_2^{2k-2}\bar z_2^{2k-2}}{\big(1+z_2^k\bar z_2^k\big)^4}\xi_2\bar{\xi}_2\xi_2\bar{\xi}_2\Bigg).
\end{aligned}
\end{equation}

The condition of $\xi = \xi_1\frac{\partial}{\partial z_1} + \xi_2\frac{\partial}{\partial z_2}$ being a unit tangent vector with respect to $\Psi_\lambda$ translates to the following equation:
\[\frac{1+z_2\bar z_2}{(1+z_1\bar z_1)^2}\xi_1\bar\xi_1 + \lambda\frac{k^2z_2^{k-1}\bar z_2^{k-1}}{(1+z_2^k\bar z_2^k)^2}\xi_2\bar\xi_2 = 1\]
\begin{equation}\label{metric-quad}
\Leftrightarrow\;\xi_2\bar\xi_2 = \frac{z_2^{-k+1}\bar z_2^{-k+1}(1+z_2^k\bar z_2^k)^2\Big(1 - \frac{(1+z_2\bar z_2)\xi_1\bar\xi_1}{(1+z_1\bar z_1)^2}\Big)}{\lambda k^2}.
\end{equation}
Substituting \eqref{metric-quad} in \eqref{hsc_f-s}, we obtain
\begin{equation}\label{hsc_f-s-1}
\begin{aligned}
     K(\xi) =\ & \frac{4(1+z_2\bar z_2)\xi_1\bar\xi_1\xi_1\bar\xi_1}{(1+z_1\bar z_1)^4} + \frac{4\Big(-1+\frac{(1+z_2\bar z_2)\xi_1\bar\xi_1}{(1+z_1\bar z_1)^2}\Big)^2}{\lambda} \\
            & + \frac{2z_2^{-k+1}\bar z_2^{-k+1}(1+z_2^k\bar z_2^k)^2\Big(-1+\frac{(1+z_2\bar z_2)\xi_1\bar\xi_1}{(1+z_1\bar z_1)^2}\Big)\xi_1\bar\xi_1}{\lambda k^2(1+z_1\bar z_1)^2(1+z_2\bar z_2)}.
\end{aligned}
\end{equation}

For $k\geq 2$, in case $\xi_1\not = 0$, the factor $z_2^{-k+1}\bar z_2^{-k+1}$ in the third term of the equation \eqref{hsc_f-s-1} causes $K(\xi)$ to diverge when $z_2 \rightarrow 0$. Therefore, the holomorphic sectional curvature is not defined on the singular fiber $D_1 \times \{0\}$. Moreover, because of the possibly negative coefficient of $z_2^{-k+1}\bar z_2^{-k+1}$, $K(\xi)$ is negative for small absolute values of $z_2$ unless the negative contribution is diminished by some sufficiently large value of $\lambda$ (depending on $z_2$) mitigating the multiplication by $z_2^{-k+1}\bar z_2^{-k+1}$. For example, for $k = 10$, i.e., for the map $(z_1,z_2) \mapsto z_2^{10},\, z_1 = 0.5,\, z_2 = 0.001$, and $\xi_1$ such that $\xi_1\bar \xi_1 = 0.01$, we obtain
\[\xi_2\bar\xi_2 \approx \frac{9.936 \times 10^{51}}{\lambda},\]
and therefore
\[K(\xi) \approx 0.00016384 - \frac{1.27181 \times 10^{50}}{\lambda}.\]
Clearly, as we consider points closer and closer to the singular fiber $D_1 \times \{0\}$, i.e., for smaller and smaller values of $z_2$,  the value of $\lambda$ is needed to be chosen larger and larger, with no finite limit existing, in order to obtain positive holomorphic sectional curvature at these points.\qed
\end{example}
\begin{example}
Let $X = \{(z_1,z_2)\in\mathbb{C}:|z_1|, |z_2|<2\}$ with local coordinates $(z_1,z_2)$ and $Y = \{z\in\mathbb{C}:|z|<4\}$ with local coordinate $z$. Consider the holomorphic map $\pi: X \rightarrow Y$ given by $(z_1,z_2) \mapsto z_1z_2$. The map $\pi$ is surjective and a submersion at all points except at the origin. On the fibers, we consider the restriction of the following tensor on $X\setminus\{(0,0)\}$ varying smoothly with respect to the base points:
\begin{equation*}
\Phi = \frac{\big(e^{-z_2\bar z_2}+e^{-z_1\bar z_1}\big)e^{z_1\bar z_1z_2\bar z_2}}{(z_1\bar z_1+z_2\bar z_2)^2}\begin{pmatrix}
                                    z_1\bar z_1 & -z_2\bar z_1 \\
                                   -z_1\bar z_2 & z_2\bar z_2
                                 \end{pmatrix}.
\end{equation*}
We obtained $\Phi$ (after some experimentation) by considering a certain family of Hermitian metrics with positive holomorphic sectional curvature on the fibers of $\pi$ and using orthogonal projections with respect to the standard Hermitian metric on $X$, as it was done at the beginning of Section \ref{proof_mthm}.\par
On the base $Y$, we consider the metric given by 
\begin{equation*}
\omega_Y =\ (10+z+\bar z)\, dz \otimes d\bar z,
\end{equation*}
which has positive holomorphic sectional curvature. The warped product metric on $X\setminus\{(0,0)\}$ constructed in our method then is the following:
\begin{equation*}\label{warp-metric-51}
\begin{aligned}
  &  \Psi_\lambda = \Phi + \lambda\pi^*\omega_Y =\\
   & \frac{\big(e^{-z_2\bar z_2}+e^{-z_1\bar z_1}\big)e^{z_1\bar z_1z_2\bar z_2}}{(z_1\bar z_1+z_2\bar z_2)^2}\begin{pmatrix}
                                    z_1\bar z_1 & -z_2\bar z_1 \\
                                   -z_1\bar z_2 & z_2\bar z_2
                                 \end{pmatrix} + \lambda (10+z_1z_2+\bar z_1\bar z_2)\begin{pmatrix}
                                         z_2\bar z_2 & z_1\bar z_2 \\
                                         z_2\bar z_1 & z_1\bar z_1
                                    \end{pmatrix}.
\end{aligned}    
\end{equation*}\par
We used the software Mathematica to (numerically) determine the holomorphic sectional curvature $K$ of $\Psi_\lambda$ at various points and directions on $X$. Due to space constraints, we just illustrate our findings here by listing below the holomorphic sectional curvature at the point $z_1 = 1.5,\ z_2=c/1.5$ (lying on the fiber over $c$) in the tangent direction $\frac{\partial}{\partial z_1}$ for various values of $\lambda$ and $c$. 
\begin{enumerate}\setlength{\itemsep}{.5cm}
    \item At the point $(1.5,-10^{-2}/1.5)$:
         \begin{enumerate}
              \item $\lambda=1\ \Rightarrow K\approx-0.395568$.
              \item $\lambda=10\ \Rightarrow K\approx-0.339601$.
              \item $\lambda=10^2\ \Rightarrow K\approx0.145204$.
          \end{enumerate}
    \item  At the point $(1.5,-10^{-3}/1.5)$:
          \begin{enumerate}
              \item $\lambda=1\ \Rightarrow K\approx-0.401863$.
              \item $\lambda=10\ \Rightarrow K\approx-0.401292$.
              \item $\lambda=10^2\ \Rightarrow K\approx-0.395597$.
              \item $\lambda=10^3\ \Rightarrow K\approx-0.339497$.
              \item $\lambda=10^4\ \Rightarrow K\approx0.146328$.
          \end{enumerate}
    \item At the point $(1.5,-10^{-4}/1.5)$:          
    \begin{enumerate}
              \item $\lambda=10\ \Rightarrow K\approx-0.40192$.
              \item $\lambda=10^2\ \Rightarrow K\approx-0.401863$.
              \item $\lambda=10^3\ \Rightarrow K\approx-0.401292$.
              \item $\lambda=10^5\ \Rightarrow K\approx-0.339483$.
              \item $\lambda=10^6\ \Rightarrow K\approx0.146438$.
          \end{enumerate}
    \item At the point $(1.5,-10^{-5}/1.5)$:      
          \begin{enumerate}
              \item $\lambda=10^2\ \Rightarrow K\approx-0.401926$.
              \item $\lambda=10^3\ \Rightarrow K\approx-0.40192$.
              \item $\lambda=10^5\ \Rightarrow K\approx-0.401292$.
              \item $\lambda=10^7\ \Rightarrow K\approx-0.339482$.
              \item $\lambda=10^8\ \Rightarrow K\approx0.146449$.
          \end{enumerate}
    \item At the point $(1.5,-10^{-10}/1.5)$:   
          \begin{enumerate}
              \item $\lambda=10^5\ \Rightarrow K\approx-0.401926$.
              \item $\lambda=10^{10}\ \Rightarrow K\approx-0.401926$.
              \item $\lambda=10^{15}\ \Rightarrow K\approx-0.401292$.
              \item $\lambda=10^{17}\ \Rightarrow K\approx-0.339482$.
              \item $\lambda=10^{18}\ \Rightarrow K\approx0.14645$.
          \end{enumerate}
 \end{enumerate}\par
Again, the pattern is clear: as we consider smaller and smaller values of $c$ in the base, the value of $\lambda$ necessary to make $K$ positive over $c$ becomes larger and larger, with no finite limit existing. \qed
\end{example}
We conclude with the following comment.
\begin{remark}
We do not know if it is possible to obtain a result similar to our Theorem \ref{mthm} in the presence of singular fibers, i.e., in case $\pi$ is not a submersion, either by modifying the warping construction or by some other method. Such a result would seem natural to us and worthy of pursuit, although a proof probably would have to be more involved than the warping method we used in this paper.
\end{remark}

\end{document}